\documentclass[12pt]{amsart}
\usepackage[margin=1.2in]{geometry}
\usepackage{tikz,esvect}
\usepackage{pgfplots}

\usepackage{amsmath,amssymb,amsfonts,amsthm,mathtools}
\usepackage{enumitem}
\usepackage{hyperref}
\usepackage{verbatim}
\usepackage{xcolor}

\hypersetup{
  colorlinks=true,
  linkcolor=blue,
  citecolor=blue,
  urlcolor=blue
}

\theoremstyle{plain}
\newtheorem{theorem}{Theorem}[section]
\newtheorem{lemma}[theorem]{Lemma}
\newtheorem*{lemma*}{Lemma} 

\newtheorem{corollary}[theorem]{Corollary}
\newtheorem{openproblem}[theorem]{Open Problem}

\theoremstyle{definition}

\theoremstyle{remark}
\newtheorem{remark}[theorem]{Remark}

\newcommand{\R}{\mathbb{R}}

\newcommand{\T}{\mathbb{T}}
\newcommand{\Z}{\mathbb{Z}}

\newcommand{\ep}{\varepsilon}

\begin{document}

\title[Nonexistence of vanishing-viscosity limits]{Nonexistence of vanishing-viscosity limits for mechanical Hamiltonian ergodic problems}

\author[Z. LIU, H. V. TRAN, Y. YU]
{Ziran Liu, Hung V. Tran, Yifeng Yu}

\dedicatory{Dedicated to the memory of Michael G. Crandall with our deepest respect and admiration}

\thanks{
H. V. Tran is partially supported by NSF grant DMS-2348305. 
}

\address[Z. Liu]
{Shanghai Institute for Mathematics and Interdisciplinary Sciences (SIMIS), Shanghai, China, 200433}
\address[Z. Liu]
{Research Institute of Intelligent Complex Systems, Fudan University, Shanghai 200433, China}
\email{zliu@simis.cn}

\address[H. V. Tran]
{
Department of Mathematics, 
University of Wisconsin-Madison, Van Vleck Hall, 480 Lincoln Drive, Madison, Wisconsin 53706, USA}
\email{hung@math.wisc.edu}

\address[Y. Yu]
{
Department of Mathematics, 
University of California at Irvine, 
California 92697, USA}
\email{yifengy@uci.edu}

\keywords{vanishing-viscosity selection; viscous Hamilton-Jacobi equations; quadratic Hamiltonian; cell problems; ergodic problems; viscosity solutions}
\subjclass[2010]{
35B10,  
35B27, 
35B40, 
35F21,  
49L25}

\begin{abstract}
For \(\varepsilon>0\), let \(\phi^\varepsilon\) be the solution of the ergodic problem
\[
    \frac12 |D\phi^\varepsilon|^2+F(x)-\varepsilon\Delta\phi^\varepsilon=c(\varepsilon)
    \qquad \text{on } \T^n,
\]
normalized by \(\phi^\varepsilon(0)=0\). 
We construct a one-dimensional example with \(F\in C^3\) for which the vanishing-viscosity limit $\lim_{\varepsilon\to0}\phi^\varepsilon$
does not exist. This gives a negative answer to a problem proposed by Jauslin, Kreiss, and Moser \cite{JKM}.
\end{abstract}

\maketitle
\section{Introduction}
\subsection{The selection problem}
Let $\T^n=\R^n/\Z^n$ be the flat $n$-dimensional torus.
We consider the mechanical Hamiltonian
\[
    H(x,p)=\frac12 |p|^2+F(x),
\]
where $F\in C^3(\T^n)$.
For every $\ep>0$, consider the ergodic (cell) problem
\begin{equation}\label{eq:viscousHJ}
    \frac12 |D\phi^\ep|^2+F(x)-\ep\Delta\phi^\ep=c(\ep)
    \qquad\text{on }\T^n.
\end{equation}
There exists a unique constant $c(\ep)\in \R$ such that \eqref{eq:viscousHJ} has a solution.
In fact, denote by
\begin{equation}\label{eq:HopfCole}
    u^\ep=\exp\left(-\frac{\phi^\ep}{2\ep}\right)>0.
\end{equation}
Then,  $\phi^\ep=-2\ep\log u^\ep$, and \eqref{eq:viscousHJ} is equivalent to
\[
    -2\ep^2\Delta u^\ep-F(x) u^\ep=-c(\ep)u^\ep \qquad\text{on }\T^n.
\]
We thus have that $-c(\ep)$ is the principal eigenvalue of the Schr\"odinger operator $-2\ep^2\Delta - F$ on $\T^n$.
Normalize $\phi^\ep(0)=0$, which is equivalent to $u^\ep(0)=1$.
A major open question is whether the full sequence \(\{\phi^\ep\}\) converges as $\ep \to 0$.
See \cite[Problem 1]{JKM}, \cite[Zero-temperature selection problem]{A}, \cite[Problem 31]{AIM}, \cite[Remark 1]{AIPS}, \cite[Section 4]{GISY}, \cite[Section 6.6.2]{Le-Mitake-Tran}, \cite[Selection problem]{Soga}, \cite[Section 6.6]{Tran}, \cite{GaoLiu}, \cite[Questions 3-4]{Tran1}.

\begin{openproblem}\label{openquestion}
    For $\ep \in (0,1)$, let $\phi^\ep$ be the unique solution to \eqref{eq:viscousHJ} with $\phi^\ep(0)=0$.
    Is the full sequence \(\{\phi^\ep\}\) convergent in $C(\T^n)$ as $\ep \to 0$?
\end{openproblem}

{\bf Motivation:} Open Problem~\ref{openquestion} is precisely a selection problem for the vanishing-viscosity process. If
\[
\lim_{\ep\to 0}\phi^\ep=\phi,
\]
then \(\phi\) is a viscosity solution of the ergodic, or cell, problem
\[
\frac{1}{2}|D\phi|^2+F(x)=c(0) \qquad \text{on } \T^n.
\]
See \cite{CIL} for the definition of viscosity solutions. This equation admits multiple viscosity solutions, even modulo additive constants, when \(F\) has more than one maximum point. 
Thus, when the full vanishing-viscosity limit exists, the process selects a distinguished limiting solution among all possible viscosity solutions. 
Such a selected limit is often called a {\it physical solution}; see \cite{AIPS}. 
This selection mechanism was first introduced in \cite{JKM} in the study of
physical solutions to the inviscid Burgers equation. In the steady one-dimensional case, they considered the periodically forced Burgers equation
\[
    v v_x = -F'(x) \qquad \text{on } \T,
\]
with prescribed momentum
\[
    \int_{\mathbb T} v\,dx = p.
\]
A natural way to select a physical solution is through the vanishing-viscosity
approximation
\[
    v^\ep v_x^\ep - \ep v_{xx}^\ep = -F'(x),
\]
and to study the limit as \(\ep\to 0\). Thus, the existence of the full
vanishing-viscosity limit is a central issue. In the case \(p=0\), this formulation is connected to \eqref{eq:viscousHJ} through the relation $v^\ep = \phi^\ep_x$.
\medskip

{\bf Positive results:} The full limit \(\lim_{\ep\to 0}\phi^\ep\) exists under the following assumptions:
\begin{itemize}
    \item \(F\) has finitely many maximum points \((x_i)_{1\leq i\leq m}\), all of which are non-degenerate.
    \item Among these maximum points, there is a unique point \(x_I\) that minimizes
    \[
        \sum_{j=1}^n \sqrt{-\lambda_j(x_i)}.
    \]
    Here \((\lambda_j(x_i))_{1\leq j\leq n}\) denote the eigenvalues of the Hessian \(D^2F(x_i)\).
\end{itemize}

\noindent In this setting, the one-dimensional case was first proved in \cite{JKM}. Suitable extensions to higher dimensions can be found in \cite{A,AIPS}; these works also reveal deep connections with the Aubry--Mather theory. We refer to \cite{Ev4} for applications to quantum mechanics, and to \cite{Yu} for related results when \(F\) has degenerate maximum points.
See also \cite{Soga,GaoLiu} for related results on vanishing-viscosity selection problems.

\medskip

In this paper, we show that the answer to Open Problem~\ref{openquestion} is negative, even in dimension one.

\medskip

\subsection{Settings and the main result}
From now on, we let $n=1$.
Let $a=1/2$.
Choose \(0<\rho<1/20\).  For \(0<r<\rho\), define
\[
    \omega(r):=\sin\bigl(\log\log(1/r)\bigr).
\]
For \(0<|x|<\rho\), set
\[
    g(x):=x^4\omega(|x|),
    \qquad g(0):=0.
\]
We have that \(g\) is \(C^3\) near \(0\), and
\[
    g(0)=g'(0)=g''(0)=g'''(0)=0.
\]
Choose \(A\not=0\) and then choose \(\rho>0\) sufficiently small so that
\[
    2|A|\rho^2\le \frac14.
\]
We choose \(V\in C^3(\T)\) such that
\begin{equation}\label{eq:Vnear0}
    V(x)=\frac12 x^2+A x^4\omega(|x|),
    \qquad |x|<\rho,
\end{equation}
and
\begin{equation}\label{eq:VnearA}
    V(x)=\frac12 (x-a)^2-A (x-a)^4\omega(|x-a|),
    \qquad |x-a|<\rho.
\end{equation}
The opposite signs \(\pm A\) in the construction play a crucial role. 
They ensure that the two local Dirichlet ground-state energies oscillate in opposite directions; see Lemma~\ref{lem:localE}.

Away from these two neighborhoods, we choose \(V\) smoothly so that
\begin{equation}\label{eq:Vpost}
    V(x)>0\qquad\text{for }x\notin\{0,a\}.
\end{equation}
This is possible because, for \(|x|<\rho\),
\[
    |A x^4\omega(|x|)|\le |A|\rho^2 x^2\le \frac18 x^2,
\]
and similarly near \(a\).  
Thus, the quadratic term dominates the oscillatory quartic perturbation.
Set
\[
    F:=-V.
\]
Then \(F\in C^3(\T)\), \(\max_\T F=0\), and the only maxima of \(F\) are $0$ and $1/2$.
Moreover,
\[
    F''(0)=F''(a)=-1.
\]
Since $\max_{\T} F=0$, the limiting ergodic (cell problem) is
\begin{equation}\label{eq:cell-0}
    \frac{1}{2}|\phi'|^2 + F(x) = c(0)=0 \qquad \text{on } \T.
\end{equation}
Here, $c(0)=0$ thanks to the inf-max formula (see \cite[Chapter 4]{Tran}).

\begin{theorem}\label{thm:main}
    Assume $n=1$ and let $F=-V$, where $V$ satisfies \eqref{eq:Vnear0}--\eqref{eq:Vpost}.
    For $\ep \in (0,1)$, let $\phi^\ep$ be the unique solution to \eqref{eq:viscousHJ} with $\phi^\ep(0)=0$.
    Then, the full sequence $\{\phi^\ep\}$ does not converge in $C(\T)$ as $\ep \to 0$.
\end{theorem}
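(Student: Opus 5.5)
The plan is to exhibit two sequences $\ep_k\to0$ and $\ep'_k\to0$ along which $\phi^\ep$ converges to two different viscosity solutions of \eqref{eq:cell-0}. Work with the Hopf--Cole transform \eqref{eq:HopfCole}: $u^\ep>0$ is the ground state of $-2\ep^2 u''+Vu=-c(\ep)u$ on $\T$, with $u^\ep(0)=1$. Set $h=2\ep$, so the operator is $-\tfrac{h^2}{2}\partial_x^2+V$. The limiting solutions of \eqref{eq:cell-0} with $\phi(0)=0$ have the form $\phi=\min\{d(0,\cdot),\,d(a,\cdot)+C\}$, where $d$ is the Agmon/Aubry distance $d(y,x)=\inf\int\sqrt{2V}$. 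Which limit is selected is decided by the ratio of the masses of $u^\ep$ near $0$ and near $a$: we expect $\phi^\ep(a)=-2\ep\log u^\ep(a)$ to tend to $0$ when $a$ "wins", and to $d(0,a)>0$ when $0$ "wins". Hence it suffices to show that $\liminf\phi^\ep(a)=0<d(0,a)=\limsup\phi^\ep(a)$, or at least that two distinct limits of $\phi^\ep(a)$ occur.

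\emph{Step 1 (local energies).} Let $E_0(\ep)$ and $E_a(\ep)$ be the Dirichlet ground-state energies of $-2\ep^2\partial_x^2+V$ on $(-\rho,\rho)$ and on $(a-\rho,a+\rho)$. Both equal $\ep+O(\ep^2)$ (harmonic oscillator, with $\hbar=2\ep$ and frequency $1$). Rescale by $x=\sqrt{\ep}\,y$. The quartic term becomes $A\ep^2 y^4\omega(\sqrt\ep|y|)$. First-order perturbation theory against the Gaussian ground state gives
\[
E_0-E_a=2A\ep^2\int y^4\,\omega(\sqrt\ep|y|)\,\gamma(y)\,dy+O(\ep^{3}),
\]
with $\gamma$ the Gaussian density. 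Since $\omega$ is slowly varying, $\omega(\sqrt\ep|y|)=\sin(\log\log(1/\sqrt\ep)+o(1))$ uniformly on $|y|\le \ep^{-\delta}$, so $E_0-E_a\approx 2cA\ep^2\sin(\log\log\ep^{-1/2})$ with $c>0$. This quantity changes sign infinitely often, and along suitable sequences $|E_0-E_a|\gtrsim \ep^2$. This is presumably Lemma~\ref{lem:localE}. The point is that the second-order and $O(\ep^3)$ error terms do not cancel the oscillation, because both wells share $F''=-1$ and the perturbations of the two wells are exactly opposite.

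\emph{Step 2 (tunneling analysis).} The interaction between the wells is of order $e^{-d(0,a)/(2\ep)}$, which is exponentially small compared with the splitting $|E_0-E_a|\sim\ep^2$. A standard two-well reduction (Helffer--Sjöstrand, or a direct comparison/maximum principle argument in 1D) then shows the following. If $E_0<E_a-\ep^{3}$, then $u^\ep$ is exponentially close to the normalized local ground state at $0$, and $u^\ep(a)\le e^{-(d(0,a)-o(1))/(2\ep)}$ while $u^\ep(0)=1$, so $\phi^\ep(a)\to d(0,a)$. If instead $E_a<E_0-\ep^{3}$, the eigenfunction concentrates at $a$, so $u^\ep(a)/u^\ep(0)\ge e^{(d(0,a)-o(1))/(2\ep)}$ and $\phi^\ep(a)\to-d(0,a)$. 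To make this concrete in 1D, use the Agmon estimate $u\lesssim e^{-(d(x,\text{well})-\eta)/(2\ep)}$ away from the dominant well. Combine it with the fact that $-c(\ep)$ lies within $e^{-\delta/\ep}$ of $\min(E_0,E_a)$, and that a local ground state with energy gap $\gtrsim\ep^2$ from the eigenvalue cannot carry comparable mass. For the latter, project $u^\ep$ restricted to each well onto its local ground state and use the spectral gap of order $\ep$ of the local oscillator.

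\emph{Step 3 (conclusion).} By Step 1 there are sequences along which $E_0<E_a$ and along which $E_a<E_0$. Along them $\phi^\ep(a)$ tends to two different values, so $\{\phi^\ep\}$ cannot converge in $C(\T)$. The main obstacle is Step 1: we must justify the perturbative expansion up to $o(\ep^2)$ uniformly, with a $C^3$ but non-smooth potential, and check that the averaged $\omega$ retains its oscillation. I would handle this with a variational upper bound using the truncated Gaussian trial function, together with a matching lower bound via the IMS localization or by writing $V=\tfrac12x^2(1+2Ax^2\omega)$ and comparing.
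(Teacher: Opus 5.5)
Your proposal is correct and follows essentially the paper's route. It uses the harmonic approximation of Lemma~\ref{lem:localE}, giving $E_0^D(\ep)-E_a^D(\ep)=6A\omega(\sqrt\ep)\ep^2+o(\ep^2)$. It then converts a polynomial gap into $U_\ep(a)/U_\ep(0)\le Ce^{-\sigma/\ep}$ (or the reverse ratio) as in Lemma~\ref{lem:localization}, via Agmon decay plus a mass bound in the higher well; your projection/resolvent argument plays the role of the paper's IMS inequality. Finally, it uses the sequences with $\omega(\sqrt\ep)=\pm1$.

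Two corrections. First, when $a$ wins, $\phi^\ep(a)$ is negative, tending to $-d(0,a)$ as your Step 2 says, not to $0$ as your opening heuristic claims. Second, you only need the sign separation $\liminf\phi^\ep(a)>0$ and $\limsup\phi^\ep(a)<0$ along the respective sequences, which is exactly what the paper proves. The sharp values $\pm d(0,a)$ would require finer non-resonant-well Agmon estimates than the crude exponential bound your mass argument delivers.
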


We note that our example does not contradict the one-dimensional full-convergence result of \cite{JKM}. 
Indeed, in \cite{JKM}, the relevant maxima of \(F\) are assumed to be nondegenerate and to have distinct second derivatives, whereas in our construction, the two maxima have the same second derivative.

\subsection*{Notations}
Throughout the paper, all intervals are understood in the periodic sense.
If a function $h:\R \to \R$ is periodic, we can think of $h$ as a function from $\T$ to $\R$ as well, and vice versa.
In this paper, we switch freely between the two
interpretations.

\subsection*{Organization of the paper}
In Section \ref{sec:Dirichlet ground states}, we study the local Dirichlet ground-state energies at $0$ and $a=1/2$.
Section \ref{sec:localization lemma} is devoted to a localization lemma if there is an $\ep^2$-energy gap.
Finally, Theorem \ref{thm:main} is proved in Section \ref{sec:main thm}.

\subsection*{Acknowledgment}
The authors used ChatGPT during the development of this work, including for suggesting proof strategies and assisting with  calculations. All mathematical statements, proofs, and verifications were subsequently completed and rigorously checked by the authors, who take full responsibility for the results. We appreciate ChatGPT for its support.

\section{Local Dirichlet ground-state energies}\label{sec:Dirichlet ground states}
Since \(F=-V\), we can write the principal eigenvalue problem as
\begin{equation}\label{eq:Schrodinger}
    P_\ep u^\ep=E(\ep)u^\ep \qquad\text{on }\T,
\end{equation}
where
\[
    P_\ep:=-2\ep^2\frac{d^2}{dx^2}+V,
    \qquad
    E(\ep):=-c(\ep).
\]
The function \(u^\ep\) is the positive principal eigenfunction of \(P_\ep\) with $u^\ep(0)=1$. It is well known that 
\begin{equation}\label{eq:basic-bound-c-ep}
    |E(\ep)|= |c(\ep)|=|c(\ep)-c(0)|\leq C\ep.
\end{equation}
Since \(V\geq 0\) and $V\not \equiv 0$, the Rayleigh characterization gives \(E(\ep)>0\). 
For the upper bound, choose a smooth cutoff function \(\xi\) supported in a small neighborhood $\sim O(\sqrt{\ep})$ of \(0\), with \(\xi\equiv1\) near \(0\), and use
\[
    v_\ep(x)=\xi(x)\exp\left(-\frac{x^2}{4\ep}\right)
\]
as a trial function in the Rayleigh quotient for \(P_\ep\). 
This gives \(E(\ep)\leq C\ep\). 
Thus, for $\ep\in (0,1)$,
\[
0< E(\ep)\leq C\ep.
\]
See \cite{Ev4, Yu, TuZhang} for other proofs of \eqref{eq:basic-bound-c-ep} and further properties of $E(\ep)$.
\medskip

Let \(E_0^D(\ep)\) be the first Dirichlet eigenvalue of \(P_\ep\) on
\((-\rho,\rho)\).  Let \(E_a^D(\ep)\) be the first Dirichlet eigenvalue of
\(P_\ep\) on \((a-\rho,a+\rho)\).
By the Rayleigh characterization, we have that
\begin{equation}\label{eq:basic-Reileigh}
    E(\ep) \leq \min\{E_0^D(\ep),E_a^D(\ep)\}.
\end{equation}

The following lemma is a standard harmonic-approximation computation for the local Dirichlet ground-state energies near the two nondegenerate wells, adapted here to the oscillatory quartic perturbations in \(V\).

\begin{lemma}\label{lem:localE}
As \(\ep\to0\),
\begin{align}
    E_0^D(\ep)
    &=\ep+3A\omega(\sqrt\ep)\ep^2+o(\ep^2),\label{eq:E0}\\
    E_a^D(\ep)
    &=\ep-3A\omega(\sqrt\ep)\ep^2+o(\ep^2).\label{eq:Ea}
\end{align}
\end{lemma}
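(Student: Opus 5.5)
We rescale to the harmonic-oscillator scale and treat the quartic term as a perturbation, using first-order Rayleigh–Ritz for the upper bound and a matching lower bound. By symmetry (replace \(x\) by \(x-a\) and \(A\) by \(-A\)) it suffices to prove \eqref{eq:E0}.

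\textbf{Rescaling.} Set \(x=\sqrt{\ep}\,y\) and \(h(y)=x^4\omega(|x|)/\ep^2=y^4\omega(\sqrt\ep|y|)\). Then
\[
P_\ep=\ep\Bigl(-2\frac{d^2}{dy^2}+\frac12 y^2\Bigr)+A\ep^2 h(y)
\]
on \((-\rho/\sqrt\ep,\rho/\sqrt\ep)\). The operator \(H_0=-2\partial_y^2+\frac12 y^2\) has ground energy \(1\) and ground state \(\psi_0(y)=c\,e^{-y^2/4}\), for which \(\int y^4\psi_0^2=3\). The spectral gap of \(H_0\) is \(2\).

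\textbf{Upper bound.} Use the trial function \(\xi(\sqrt\ep y)\psi_0(y)\), where \(\xi\) is a cutoff equal to \(1\) on \((-\rho/2,\rho/2)\). The cutoff produces errors of size \(O(e^{-c/\ep})\). This gives
\[
E_0^D(\ep)\le \ep+A\ep^2\int h\,\psi_0^2\,dy+O(e^{-c/\ep}).
\]
The key observation is the slow variation of \(\omega\). Since \(\omega'(r)=\cos(\log\log(1/r))/(r\log(1/r))\), we have for \(r\) between \(\sqrt\ep\) and \(\sqrt\ep|y|\)
\[
|\omega(\sqrt\ep|y|)-\omega(\sqrt\ep)|\le \frac{|\log|y||}{\log(1/\sqrt\ep)-|\log |y||}.
\]
This tends to \(0\) for each fixed \(y\). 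Dominated convergence, using the Gaussian weight and \(|\omega|\le1\), then gives
\[
\int h\,\psi_0^2\,dy=3\omega(\sqrt\ep)+o(1).
\]

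\textbf{Lower bound.} Let \(\psi\) be the normalized Dirichlet ground state in the \(y\)-variable. The perturbation satisfies \(|A\ep h|\le |A|\rho^2 y^2\le \frac18 y^2\), so \(P_\ep/\ep\ge \frac34 H_0\)-type comparison applies. This yields \(E_0^D\le \ep+O(\ep^2)\) from the upper bound, together with Gaussian-type decay bounds \(\int y^{k}\psi^2\le C_k\) uniformly in \(\ep\). The decay bounds come either from Agmon estimates or from the moment identity obtained by testing the equation against \(y^k\psi\).

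Decompose \(\psi=\alpha\psi_0+\beta\), with \(\beta\perp\psi_0\) after extending \(\psi\) by zero. The energy identity reads
\[
E_0^D/\ep=\langle H_0\psi,\psi\rangle+A\ep\langle h\psi,\psi\rangle\ge \alpha^2+3\|\beta\|^2+A\ep\langle h\psi,\psi\rangle .
\]
Combined with \(E_0^D/\ep\le 1+O(\ep)\), this gives \(\|\beta\|^2=O(\ep)\), and the moment bounds upgrade this to a weighted estimate. Then
\[
\langle h\psi,\psi\rangle=\langle h\psi_0,\psi_0\rangle+O\bigl(\|\beta\|_{L^2(y^8\,dy)}\bigr)=3\omega(\sqrt\ep)+o(1),
\]
using the weighted \(L^2\) control of \(\beta\) obtained by interpolating \(\|\beta\|\to0\) with the uniform moment bounds. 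Hence \(E_0^D\ge \ep+3A\omega(\sqrt\ep)\ep^2+o(\ep^2)\).

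\textbf{Main obstacle.} The main difficulty is the lower bound: it needs uniform moment (Agmon) bounds for the Dirichlet ground state, so that the unbounded weight \(y^4\) can be controlled. The slowly varying \(\omega\) is handled entirely by the dominated-convergence estimate above.
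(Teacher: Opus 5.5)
Your proposal is correct and follows essentially the same route as the paper: harmonic rescaling, a cut-off $\psi_0$ trial function for the upper bound, coercivity plus uniform tail/moment control for the lower bound, and dominated convergence exploiting the slow variation of $\omega$. The only difference is in the technical tools for the lower bound. You obtain $\psi\to\psi_0$ quantitatively, via the spectral gap of $H_0$ together with polynomial moment identities and interpolation. The paper instead uses a Gaussian-weighted estimate and an $L^2$-compactness argument.
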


\begin{proof}
We prove \eqref{eq:E0}; the proof of \eqref{eq:Ea} is identical with the sign
of \(A\) reversed.

In the well near \(0\), set
\[
    x=\sqrt\ep z.
\]
The operator becomes
\[
    -2\ep^2\frac{d^2}{dx^2}+\frac12x^2+A x^4\omega(|x|)
    =\ep\left(-2\frac{d^2}{dz^2}+\frac12 z^2
      +\ep A z^4\omega(\sqrt\ep |z|)\right).
\]
The rescaled interval is
\[
    I_\ep:=\left(-\frac{\rho}{\sqrt\ep},\frac{\rho}{\sqrt\ep}\right).
\]
Denote by \(e_0^D(\ep)=E_0^D(\ep)/\ep\) the first Dirichlet eigenvalue of
\begin{equation}\label{eq:eigen-L-ep}
    L_\ep:=-2\frac{d^2}{dz^2}+\frac12 z^2
      +\ep A z^4\omega(\sqrt\ep |z|)
    \qquad\text{in }I_\ep.
\end{equation}
Because \(|\omega|\le1\) and \(2|A|\rho^2\le1/4\), for \(|z|\le\rho/\sqrt\ep\),
\[
    \left|\ep A z^4\omega(\sqrt\ep |z|)\right|
    \le |A|\rho^2 z^2\le\frac18 z^2.
\]
Hence
\begin{equation}\label{eq:coerciveRescaled}
    L_\ep\ge -2\frac{d^2}{dz^2}+\frac38 z^2
\end{equation}
in the sense of quadratic forms.  This uniform coercivity implies that the
first eigenfunction is localized on bounded \(z\)-scales, uniformly in \(\ep\).
We now compute the first correction.

Let
\[
    H_0:=-2\frac{d^2}{dz^2}+\frac12 z^2
    \qquad\text{on }L^2(\R).
\]
Its normalized ground state is
\[
    \psi_0(z)=(2\pi)^{-1/4}e^{-z^2/4}
\]
with unit ground energy.  
The next eigenvalue is separated from \(1\) by
a fixed positive gap.

For the upper bound, take \(\psi_0\), cut it off smoothly inside \(I_\ep\), and
use it as a trial function in the Rayleigh characterization.  
Since the cutoff occurs at the distance \(\rho/(2\sqrt\ep)\), the cutoff error is \(O(e^{-\kappa/\ep})\).  Thus
\[
    e_0^D(\ep)
    \le 1+
    \ep A\int_\R z^4\omega(\sqrt\ep |z|)\psi_0(z)^2 \,dz
    +o(\ep).
\]

For the lower bound, let \(\psi_\ep\) be the normalized positive first
Dirichlet eigenfunction of \(L_\ep\) on \(I_\ep\).  The upper bound just obtained
and the coercivity \eqref{eq:coerciveRescaled} give
\begin{equation}\label{eq:energy-bound}
    \int_{I_\ep}\left(|\psi_\ep'|^2+z^2\psi_\ep^2\right)\,dz\le C.
\end{equation}
Multiply the eigenvalue equation $L_\varepsilon \psi_\varepsilon = e_0^D(\varepsilon)\psi_\varepsilon$  by
\(e^{\theta z^2}\psi_\ep\) and integrate over $I_\ep$. For small \(\theta>0\),  using the Cauchy-Schwarz inequality, we get
\begin{equation}\label{eq:bound-e-theta-z}
    \int_{I_\ep}e^{\theta z^2}\left(\frac{1}{2}|\psi_\ep'|^2+\frac{1}{4}z^2\psi_\ep^2\right)\,dz\le 2    \int_{I_\ep}e^{\theta z^2}\psi_\ep^2\,dz.
\end{equation}
On the region \(|z|\ge 4\), we have
\[
    2e^{\theta z^2}\psi_\ep^2
    \leq \frac{1}{8}e^{\theta z^2}z^2\psi_\ep^2.
\]
Hence the right-hand side of \eqref{eq:bound-e-theta-z} can be absorbed into the \(z^2\psi_\ep^2\)-term on the left outside \([-4,4]\), while the contribution on \([-4,4]\) is bounded by $C$ thanks to \eqref{eq:energy-bound}. 
Therefore,
\[
    \int_{I_\ep \setminus [-4,4]}e^{\theta z^2}z^2\psi_\ep(z)^2\, dz
    \le C.
\]
This implies
\begin{equation}\label{eq:uniformGaussianTail}
    \int_{I_\ep \setminus [-R,R]}\left(1+z^4\right)\psi_\ep(z)^2\, dz
    \le C e^{-cR^2}
    \qquad\text{for }4\le R\le \frac{\rho}{2\sqrt\ep},
\end{equation}
with constants $c,C>0$ independent of \(\ep\). 

By the energy bound \eqref{eq:energy-bound}, the family
\(\{\psi_\ep\}\), after extension by zero outside \(I_\ep\), is uniformly
bounded in \(H^1_{\mathrm{loc}}(\mathbb R)\). Together with the uniform tail
bound \eqref{eq:uniformGaussianTail}, this implies precompactness in
\(L^2(\mathbb R)\). Let \(\psi\) be the \(L^2\)-limit of a convergent
subsequence. Since \(e_0^D(\ep)\to 1\), the perturbation is \(o(1)\) on every
fixed bounded interval, and the tails are uniformly controlled by
\eqref{eq:uniformGaussianTail}, the limit \(\psi\) has unit \(L^2\)-norm and
attains the ground-state energy of
\[
    H_0=-2\frac{d^2}{dz^2}+{1\over 2}z^2
    \quad\text{on }L^2(\mathbb R).
\]
By the uniqueness and positivity of the ground state of \(H_0\), we have
\(\psi=\psi_0\). Hence, the whole family satisfies
\[
    \psi_\ep\to\psi_0
    \qquad\text{in }L^2(\mathbb R),
\]
after extending \(\psi_\ep\) by zero outside \(I_\ep\).  
Furthermore,
\[
\psi_\ep \rightharpoonup \psi_0 \qquad \text{in } H^1(\R).
\]
Using the uniform tail bound \eqref{eq:uniformGaussianTail} again, we also have
\begin{equation}\label{eq:z4Convergence}
    \int_{\R} z^4\omega(\sqrt\ep |z|)\psi_\ep(z)^2\,dz
    =
    \int_{\R} z^4\omega(\sqrt\ep |z|)\psi_0(z)^2\,dz+o(1).
\end{equation}
Therefore, we derive that
\begin{align*}
       e_0^D(\ep)
    &=\int_{\R}\left(2|\psi_\ep'|^2+\frac{1}{2}z^2\psi_\ep^2\right)\,dz+
    \ep A\int_\R z^4\omega(\sqrt\ep |z|)\psi_\ep(z)^2\,dz\\
    &\ge 1+
    \ep A\int_\R z^4\omega(\sqrt\ep |z|)\psi_0(z)^2\,dz
    +o(\ep). 
\end{align*}
The second inequality follows from the fact that \(1\) is the principal eigenvalue of \(H_0\), together with \eqref{eq:z4Convergence}.

Combining the upper and lower bounds yields
\begin{equation}\label{eq:preDominated}
    e_0^D(\ep)
    =1+
    \ep A\int_\R z^4\omega(\sqrt\ep |z|)\psi_0(z)^2\,dz
    +o(\ep).
\end{equation}

It remains to evaluate the integral.  For fixed \(z\ne0\),
\[
    \log\log\frac{1}{\sqrt\ep |z|}
    =\log\left(\log\frac{1}{\sqrt\ep}+\log\frac1{|z|}\right)
    =\log\log\frac1{\sqrt\ep}+o(1).
\]
Hence,
\[
    \omega(\sqrt\ep |z|)-\omega(\sqrt\ep)\to0
    \qquad\text{for fixed }z\ne0.
\]
Since \(|\omega|\le1\) and \(z^4\psi_0(z)^2\in L^1(\R)\), the dominated convergence theorem gives
\[
    \lim_{\ep \to 0}\int_\R \left(\omega(\sqrt\ep |z|)-\omega(\sqrt\ep)\right)z^4\psi_0(z)^2\,dz
    =0.
\]
Thus,
\[
    \int_\R z^4\omega(\sqrt\ep |z|)\psi_0(z)^2\,dz
    =\omega(\sqrt\ep)
      \int_\R z^4\psi_0(z)^2\,dz+o(1).
\]
As the density \(\psi_0^2\) is the centered Gaussian of variance \(1\),
\[
    \int_\R z^4\psi_0(z)^2\,dz=3.
\]
Thus,
\[
    e_0^D(\ep)=1+3A\omega(\sqrt\ep)\ep+o(\ep),
\]
and multiplying by \(\ep\) proves \eqref{eq:E0}.
\end{proof}

\section{A localization lemma}\label{sec:localization lemma}
Let \(U_\ep\in C(\T)\) be the positive principal eigenfunction of \(P_\ep\), normalized by
\[
    \int_\T U_\ep^2\,d x=1.
\]

We use the following localization lemma. 
It is a standard semiclassical localization statement: an energy gap between competing wells forces the ground state to concentrate near the lower well. 
For classical results in this direction, we refer the reader to \cite{Ag,Simon}.

\begin{lemma}[Localization from an \(\ep^2\)-energy gap]\label{lem:localization}
Fix \(C_0>0\).
There are constants \(\sigma>0\), \(C>0\), and \(\ep_0>0\) with the following
property.

If, along a sequence \(\ep\to0\),
\begin{equation}\label{eq:gap0}
    E_0^D(\ep)\le E_a^D(\ep)-C_0\ep^2,
\end{equation}
then, for \(0<\ep<\ep_0\) along that sequence,
\begin{equation}\label{eq:ratio0a}
    \frac{U_\ep(a)}{U_\ep(0)}\le C e^{-\sigma/\ep}.
\end{equation}
Consequently, with \(u^\ep=U_\ep/U_\ep(0)\) and
\(\phi^\ep=-2\ep\log u^\ep\),
\begin{equation}\label{eq:positiveAtA}
    \liminf_{\ep\to0}\phi^\ep(a)>0
\end{equation}
along that sequence.

Similarly, if
\begin{equation}\label{eq:gapa}
    E_a^D(\ep)\le E_0^D(\ep)-C_0\ep^2,
\end{equation}
then
\begin{equation}\label{eq:ratioa0}
    \frac{U_\ep(0)}{U_\ep(a)}\le C e^{-\sigma/\ep},
\end{equation}
and
\begin{equation}\label{eq:negativeAtA}
    \limsup_{\ep\to0}\phi^\ep(a)<0
\end{equation}
along that sequence.
\end{lemma}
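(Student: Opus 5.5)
We will prove only the first half of Lemma~\ref{lem:localization}; the second half follows by exchanging the roles of $0$ and $a$. The plan is an Agmon-type decomposition of the ground state $U_\ep$ into pieces living in the two wells, combined with the energy gap \eqref{eq:gap0}.

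\textbf{Step 1: exponential smallness away from the wells.} Since $E(\ep)\le C\ep$ and $V\ge c_\delta>0$ off $(-\delta,\delta)\cup(a-\delta,a+\delta)$ (by \eqref{eq:Vpost}), the equation $-2\ep^2U''=(E-V)U$ gives $U_\ep''\ge (c_\delta/4\ep^2)U_\ep$ on this middle region $M_\delta$. Comparing with $\cosh$-type barriers on each component of $M_\delta$ (where $U_\ep\le\max U_\ep$), we get
\[
U_\ep+\ep|U_\ep'|\le Ce^{-\sigma_1/\ep}\|U_\ep\|_\infty
\]
on the shrunken region $M_{2\delta}$. One-dimensional Sobolev bounds give $\|U_\ep\|_\infty\le C\ep^{-1}$, which is harmless since it is polynomial.

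\textbf{Step 2: exploit the gap.} Take a smooth cutoff $\chi_a$ that equals $1$ on $(a-2\delta,a+2\delta)$ and is supported in $(a-\rho,a+\rho)$, with $2\delta<\rho$. Set $w=\chi_aU_\ep$. Then
\[
(P_\ep-E)w=-2\ep^2(2\chi_a'U_\ep'+\chi_a''U_\ep)=:r,
\]
and $r$ is supported in $M_{2\delta}$, so $\|r\|_{L^2}\le Ce^{-\sigma_1/\ep}$ by Step 1. Since $w$ is admissible for the Dirichlet problem on $(a-\rho,a+\rho)$,
\[
\langle (P_\ep-E)w,w\rangle\ge (E_a^D-E)\|w\|^2.
\]
The key input is $E(\ep)\le E_0^D(\ep)$ from \eqref{eq:basic-Reileigh}, which together with \eqref{eq:gap0} gives $E_a^D-E\ge C_0\ep^2$. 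Therefore
\[
\|w\|_{L^2}\le C\ep^{-2}e^{-\sigma_1/\ep}.
\]
Elliptic/Sobolev estimates in one dimension (using $w''$ from the equation, $\|w''\|\le C\ep^{-2}(\|w\|+\|r\|)$) then give $U_\ep(a)=w(a)\le Ce^{-\sigma/\ep}$ after shrinking $\sigma$ to absorb the powers of $\ep$.

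\textbf{Step 3: lower bound at $0$.} We need $U_\ep(0)\ge e^{-o(1)/\ep}$ or better. From $\|U_\ep\|_{L^2}=1$ and Steps 1--2, essentially all the mass lies in $(-2\delta,2\delta)$. To pass from mass to a pointwise value at $0$, use the Harnack-type fact that $\phi=-2\ep\log U$ satisfies $\tfrac12|\phi'|^2-\ep\phi''=E-V\le C\ep$, giving a uniform Lipschitz bound on $\phi^\ep$. Hence $U_\ep(0)\ge \max_{(-2\delta,2\delta)}U_\ep\, e^{-C\delta/\ep}$. Choosing $\delta$ small relative to $\sigma_1$ (note $\sigma_1$ depends on $\delta$ only through $\min_{M_\delta}V\sim\delta^2$ and the width; this dependence needs care, and it is the main obstacle) yields \eqref{eq:ratio0a}.

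To avoid this $\delta$-balancing, a cleaner alternative is to keep $\delta$ fixed and get decay at $a$ directly. Step 2 can be run with $\delta$ tied only to $\rho$, giving $U_\ep(a)\le e^{-\sigma/\ep}$ absolutely. For the denominator, the Lipschitz bound on $\phi$ over the whole circle only gives $U_\ep(0)\ge \|U\|_\infty e^{-L/(2\ep)}$, which is too weak. I would therefore instead use a semiclassical lower bound from the mass near $0$: the mass in $(-\sqrt\ep R,\sqrt\ep R)$ is bounded below, as in the Gaussian tail argument of Lemma~\ref{lem:localE} applied to $\chi_0U_\ep$, and the Lipschitz bound on this $\sqrt\ep$-scale costs only $e^{-CR/\sqrt\ep}\cdot$, which is subexponential relative to $e^{-\sigma/\ep}$. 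This gives $U_\ep(0)\ge e^{-C/\sqrt\ep}$.

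\textbf{Step 4: the consequence.} Finally, \eqref{eq:positiveAtA} follows because
\[
\phi^\ep(a)=-2\ep\log\frac{U_\ep(a)}{U_\ep(0)}\ge 2\sigma-2\ep\log C.
\]
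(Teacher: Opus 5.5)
Your proof is correct and has the same skeleton as the paper's: exponential smallness in the barrier, then the gap $E_a^D-E\ge C_0\ep^2$ beating an exponentially small cutoff error on the higher well, then conversion to pointwise values. The tools differ in each step.

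\textbf{Barrier decay.} You use one-dimensional $\cosh$ comparison, which is elementary and gives pointwise bounds directly. The paper uses the Agmon identity, which gives only $L^2$ smallness but works in any dimension.

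\textbf{Gap step.} Your single-cutoff estimate for $w=\chi_aU_\ep$ is the paper's IMS mechanism in another form. Integrating by parts gives $\langle r,w\rangle=2\ep^2\int_\T|\chi_a'|^2U_\ep^2\,dx$, so you do not even need the bound on $U_\ep'$.

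\textbf{Lower bound at $0$.} This is the genuine difference. The paper applies the Harnack inequality on the $\sqrt\ep$-scale to get $U_\ep(0)\ge c\ep^{-1/4}$. You use the $\ep$-uniform Lipschitz bound on $\phi^\ep$, which only yields $U_\ep(0)\ge e^{-C/\sqrt\ep}$ but suffices because the loss is subexponential. As you suspected, your first version of Step 3 does not close when one $\delta$ governs both the decay rate (of order $\delta^2$) and the Lipschitz loss (of order $\delta$). Keep the $\sqrt\ep$-scale version, which decouples the two scales.

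Two small fixes. First, the equation is $\frac12|\phi'|^2-\ep\phi''=V-E$, not $E-V$. It is this sign that gives $|\phi'|^2\le 2(\max V+|E(\ep)|)$ at extrema of $\phi'$, which is the uniform Lipschitz bound you need. Second, the mass lower bound on $|x|<R\sqrt\ep$ needs no Gaussian-tail argument. The bound $\int_\T VU_\ep^2\,dx\le E(\ep)\le C\ep$, together with $V\ge cx^2$ near $0$ and Chebyshev's inequality, already gives it, as in the paper.
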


\begin{proof}
We prove the case \eqref{eq:gap0}; the proof of \eqref{eq:gapa} is identical
with the roles of \(0\) and \(a\) exchanged.
We divide the proof into several steps.

\medskip
\noindent\textbf{Step 1: Exponential smallness in the barrier region.}

We first use a standard Agmon-type localization estimate: since
\(E(\varepsilon)=O(\varepsilon)\) while \(V\) is uniformly positive away
from the wells \(0\) and \(a\), the normalized principal eigenfunction is
exponentially small in the forbidden region:
\begin{equation}\label{eq:barrierMassSimple}
    \int_{\T\backslash O_r} U_\ep^2\,d x\le C e^{-s_r/\ep}.
\end{equation}
Here, $O_r= (-r,r)\cup (a-r,a+r)$ for $r\in (0,1/8)$ and $s_r$ is a positive constant depending on $r$ and $V$.

For the reader's convenience, we include a short proof in our one-dimensional setting.

Since
\(V>0\) on \(\mathbb T\setminus\{0,a\}\), there exists \(\nu>0\) depending on $r$ and $V$ such that
\[
    V\ge \nu \qquad \text{on $\T\backslash O_{r/2}$}.
\]
Choose a Lipschitz function \(\Phi\ge 0\) on \(\mathbb T\) and a small positive number $s$ such that
\begin{itemize}
    \item \(\Phi=0\) in $O_{r/2}$;
    \item  \(\Phi\ge s>0\) in  $\T\backslash O_r$;
    \item $\frac12|\Phi'|^2\le \frac{\nu}{4}$ in $\T\backslash O_{r/2}$.
\end{itemize}

Recall that \(U_\varepsilon\) satisfies
\[
    -2\varepsilon^2 U_\varepsilon''+VU_\varepsilon
    =E(\varepsilon)U_\varepsilon,
    \qquad \|U_\varepsilon\|_{L^2(\mathbb T)}=1.
\]
Testing this equation with \(e^{\Phi/\varepsilon}U_\varepsilon\), and completing
the square, gives the Agmon identity:
\[
    2\varepsilon^2
    \int_{\mathbb T}
    \left|
        \left(e^{\Phi/(2\varepsilon)}U_\varepsilon\right)'
    \right|^2 dx
    +
    \int_{\mathbb T}
    \left(
        V-E(\varepsilon)-\frac12|\Phi'|^2
    \right)
    e^{\Phi/\varepsilon}U_\varepsilon^2\,dx
    =0.
\]
Since \(|E(\varepsilon)|\le C\varepsilon\), for sufficiently small
\(\varepsilon\) we have
\[
    V-E(\varepsilon)-\frac12|\Phi'|^2
    \ge \frac{\nu}{2} \quad \text{on $\T\backslash O_{r/2}$}.
\]
Accordingly,  
\begin{align*}
 \frac{\nu}{2}
    \int_{\T\backslash O_{r/2}}
    e^{\Phi/\varepsilon}U_\varepsilon^2\,dx
    &\le -\int_{O_{r/2}}
    \left(
        V-E(\varepsilon)-\frac12|\Phi'|^2
    \right)
    e^{\Phi/\varepsilon}U_\varepsilon^2\,dx\\
    &=-\int_{O_{r/2}}
    \left(
        V-E(\varepsilon)
    \right)U_\varepsilon^2\,dx\\    
   & \leq E(\varepsilon)\int_{O_{r/2}} U_\varepsilon^2\,dx\le C\varepsilon.
\end{align*}
Since \(\Phi\ge s\) on \(\T\backslash O_r\), we obtain
\[
    \int_{\T\backslash O_r}U_\varepsilon^2\,dx
    \le C e^{-s/\varepsilon},
\]
after decreasing \(s>0\) if necessary. Hence (\ref{eq:barrierMassSimple}) holds. 

\medskip
\noindent\textbf{Step 2: Localization and small mass in the higher well.} Let $K=\T\backslash O_{\rho/3}$.  Choose three smooth cutoff functions \(\chi_0,\chi_a,\chi_b\) such that
\[
    \chi_0^2+\chi_a^2+\chi_b^2=1,
\]
\(\chi_0\) is supported in \((-\rho,\rho)\) and \(\chi_0\equiv 1\) on $(-\rho/4,\rho/4)$, \(\chi_a\) is supported in
\((a-\rho,a+\rho)\) and $\chi_a\equiv 1$ on \((a-\rho/4,a+\rho/4)\), \(\chi_b\) is supported in $K$. Moreover, the derivatives \(\chi_i'\) for $i=0,a,b$ are supported in  \(K\).  
See Figure \ref{fig:cutoff}.
\begin{center}
\begin{figure}[htp!]
\begin{tikzpicture}[x=1.15cm,y=3.8cm]

\draw[->, thick] (0,0) -- (10.4,0) node[right] {$x$};
\draw[->, thick] (0,0) -- (0,1.15);


\draw (0.02,1) -- (-0.02,1) node[left] {$1$};
\draw (0.02,0) -- (-0.02,0) node[left] {$0$};

\foreach \x in {2,3,7,8}
{
  \draw[dashed, gray] (\x,0) -- (\x,1.05);
}

\draw[very thick, blue] (0,1) -- (2,1);
\draw[very thick, blue, domain=2:3, samples=200, smooth, variable=\x]
  plot ({\x},{cos(90*(6*(\x-2)^5 - 15*(\x-2)^4 + 10*(\x-2)^3))});
\draw[very thick, blue] (3,0) -- (10,0);

\draw[very thick, red] (0,0) -- (2,0);
\draw[very thick, red, domain=2:3, samples=200, smooth, variable=\x]
  plot ({\x},{sin(90*(6*(\x-2)^5 - 15*(\x-2)^4 + 10*(\x-2)^3))});
\draw[very thick, red] (3,1) -- (7,1);
\draw[very thick, red, domain=7:8, samples=200, smooth, variable=\x]
  plot ({\x},{cos(90*(6*(\x-7)^5 - 15*(\x-7)^4 + 10*(\x-7)^3))});
\draw[very thick, red] (8,0) -- (10,0);

\draw[very thick, teal!70!black] (0,0) -- (7,0);
\draw[very thick, teal!70!black, domain=7:8, samples=200, smooth, variable=\x]
  plot ({\x},{sin(90*(6*(\x-7)^5 - 15*(\x-7)^4 + 10*(\x-7)^3))});
\draw[very thick, teal!70!black] (8,1) -- (10,1);

\node[blue] at (1,1.08) {$\chi_0$};
\node[red] at (5,1.08) {$\chi_b$};
\node[teal!70!black] at (9,1.08) {$\chi_a$};

\end{tikzpicture}
\caption{Graphs of $\chi_0, \chi_a, \chi_b$}
\label{fig:cutoff}
\end{figure}
    
\end{center}
We have
\begin{equation}\label{eq:IMS}
\begin{aligned}
    \int_\T \left(2\ep^2 |U_\ep'|^2+VU_\ep^2\right)\,d x
    &=
    \sum_{\theta\in\{0,a,b\}}
    \int_\T\left(2\ep^2 |(\chi_\theta U_\ep)'|^2
      +V\chi_\theta^2U_\ep^2\right)\,d x \\
    &\qquad
    -2\ep^2\sum_{\theta\in\{0,a,b\}}
      \int_\T |\chi_\theta'|^2U_\ep^2\,d x .
\end{aligned}
\end{equation}
Because the derivatives of the cutoffs are supported in \(K\), the last term
is \(O(e^{-s/\ep})\) by \eqref{eq:barrierMassSimple}.  Also,
\(\chi_0U_\ep\in H_0^1((-\rho,\rho))\) and
\(\chi_aU_\ep\in H_0^1((a-\rho,a+\rho))\).  Hence
\[
    \int_{\T}\left(2\ep^2 |(\chi_0 U_\ep)'|^2+V\chi_0^2U_\ep^2\right)\,dx
    \ge E_0^D(\ep)\int_{\T} \chi_0^2U_\ep^2\,dx,
\]
and
\[
    \int_{\T}\left(2\ep^2 |(\chi_a U_\ep)'|^2+V\chi_a^2U_\ep^2\right)\,dx
    \ge E_a^D(\ep)\int_{\T} \chi_a^2U_\ep^2\,dx.
\]
The remaining \(\chi_b\)-term is nonnegative.  Therefore
\begin{equation}\label{eq:lowerMassEnergy}
    E(\ep)
    \ge
    E_0^D(\ep)m_0(\ep)+E_a^D(\ep)m_a(\ep)-C e^{-s/\ep},
\end{equation}
where
\[
    m_0(\ep):=\int_{\T}\chi_0^2U_\ep^2\,dx,
    \qquad
    m_a(\ep):=\int_{\T}\chi_a^2U_\ep^2\,dx.
\]
Moreover, by \eqref{eq:barrierMassSimple},
\begin{equation}\label{eq:m0-mb-bound}
    m_0(\ep)+m_a(\ep)=1-\int_\T\chi_b^2U_\ep^2\,dx=1-\int_K\chi_b^2U_\ep^2\,dx=1+O(e^{-s/\ep}).
\end{equation}
Using \eqref{eq:basic-Reileigh}, \eqref{eq:gap0}, and \eqref{eq:lowerMassEnergy}, we get
\[
    E_0^D(\ep)
    \ge E(\ep)
    \ge
    E_0^D(\ep)(m_0(\ep)+m_a(\ep))+C_0\ep^2m_a(\ep)-C e^{-s/\ep}.
\]
Thus
\[
    C_0\ep^2m_a(\ep)
    \le C e^{-s/\ep}.
\]
Absorbing the polynomial factor \(\ep^{-2}\) into the exponential, we obtain,
for a smaller \(s>0\),
\begin{equation}\label{eq:massInHigherWell}
    m_a(\ep)\le C e^{-s/\ep}.
\end{equation}
Since \(\chi_a\equiv1\) on \((a-\rho/4,a+\rho/4)\),
\begin{equation}\label{eq:L2nearA}
    \int_{|x-a|<\rho/4} U_\ep^2\,d x\le C e^{-s/\ep}.
\end{equation}
Thanks to \eqref{eq:m0-mb-bound} and \eqref{eq:massInHigherWell}, $m_0(\ep)=1+O(e^{-s/\ep})$. Since $\chi_0\equiv 1$ on $(-\rho/4,\rho/4)$, combining with \eqref{eq:barrierMassSimple}, we have that
\begin{equation}\label{eq:L2near0Large}
    \int_{|x|<\rho/4} U_\ep^2\,d x\ge \frac12
\end{equation}
for all sufficiently small \(\ep\).

\medskip
\noindent\textbf{Step 3: From \(L^2\)-localization to pointwise comparison.}
We first estimate \(U_\ep(a)\) from above.  On the interval
\(|x-a|<2\sqrt\ep\), write
\[
    x=a+\sqrt\ep z,
    \qquad
    W_\ep(z):=U_\ep(a+\sqrt\ep z).
\]
For \(|z|\le2\), the eigenvalue equation becomes
\[
    -2W_\ep''(z)+\frac{V(a+\sqrt\ep z)}{\ep}W_\ep(z)
    =\frac{E(\ep)}{\ep}W_\ep(z).
\]
Since \(V(a+\sqrt\ep z)/\ep\) and \(E(\ep)/\ep\) are bounded for \(|z|\le2\),
we have
\[
    -W_\ep''(z)=q(z) W_\ep(z)
    \qquad\text{for }|z|\le2,
\]
where $\|q\|_{L^\infty((-2,2))} \leq C$.
Let $\xi$ be a smooth cutoff function such that $\xi=1$ in $(-1,1)$ and $\xi$ is supported in $(-2,2)$.
Multiply the above by $\xi^2 W_\ep$ and integrate to imply
\[
    \int_{-2}^2|(\xi W_\ep)'|^2\,dz\le C\int_{-2}^{2} W_\ep(z)^2\,d z.
\]
Therefore, by the fundamental theorem of calculus and the H\"older inequality,
\[
    |W_\ep(0)|^2=\left(\int_0^2 (\xi W_\ep)'\,dz\right)^2\le 2 \int_{0}^2|(\xi W_\ep)'|^2\,dz\leq  C\int_{-2}^{2} W_\ep(z)^2\,d z.
\]
Changing variables back to \(x\), and using \eqref{eq:L2nearA}, we get
\begin{equation}\label{eq:pointUpperA}
    U_\ep(a)^2
    \le C\ep^{-1/2}\int_{|x-a|<2\sqrt\ep}U_\ep(x)^2\,d x
    \le C\ep^{-1/2}e^{-s/\ep}.
\end{equation}
Thus, decreasing \(s\) once more,
\begin{equation}\label{eq:pointUpperA2}
    U_\ep(a)
    \le C e^{-s/\ep}.
\end{equation}

We next estimate \(U_\ep(0)\) from below.  Since
\[
    E(\ep)=\int_\T\left(2\ep^2|U_\ep'|^2+VU_\ep^2\right)\,d x\le C\ep,
\]
and near \(0\) we have \(V(x)\ge c x^2\), it follows that
\[
    \int_{|x|<\rho/4} x^2U_\ep(x)^2\,d x\le C\ep.
\]
Combining this with \eqref{eq:L2near0Large}, we can choose a fixed large
number \(R>1\), independent of \(\ep\), such that
\begin{equation}\label{eq:massSqrtScale}
    \int_{|x|<R\sqrt\ep}U_\ep(x)^2\,d x\ge \frac14.
\end{equation}
Rescale \(x=\sqrt\ep z\), \(W_\ep(z):=U_\ep(\sqrt\ep z)\).  On \(|z|\le 2R\),
\(W_\ep\) satisfies a second-order linear equation 
\[
    -2W_\ep''(z)+\frac{V(\sqrt\ep z)}{\ep}W_\ep(z)
    =\frac{E(\ep)}{\ep}W_\ep(z).
\]
Note that \(0 \leq V(\sqrt\ep z)/\ep \leq CR^2\) and \(|E(\ep)/\ep| \leq C\) for \(|z|\le 2R\).
We have
\[
    -W_\ep''(z)=q(z) W_\ep(z)
    \qquad\text{for }|z|\le 2R,
\]
where $\|q\|_{L^\infty((-2R,2R))}\leq C(1+R^2)$.
As $W_\ep>0$, the one-dimensional Harnack inequality gives
\[
    \sup_{|z|\le R} W_\ep(z)
    \le C_R W_\ep(0).
\]
Using \eqref{eq:massSqrtScale},
\[
    \frac14
    \le \sqrt\ep\int_{|z|<R} W_\ep(z)^2\,d z
    \le C_R\sqrt\ep\, W_\ep(0)^2.
\]
Therefore
\begin{equation}\label{eq:pointLower0}
    U_\ep(0)
    \ge c_R\ep^{-1/4}.
\end{equation}
Combining \eqref{eq:pointUpperA2} and \eqref{eq:pointLower0}, and absorbing the
polynomial factor into the exponential, yields
\[
    \frac{U_\ep(a)}{U_\ep(0)}\le C e^{-\sigma/\ep}
\]
for some \(\sigma>0\).  
Hence,
\[
    \liminf_{\ep \to 0}\phi^\ep(a)
    =\liminf_{\ep \to 0}\left(-2\ep\log\frac{U_\ep(a)}{U_\ep(0)}\right)
    \ge 2\sigma.
\]
This proves \eqref{eq:positiveAtA}.

\end{proof}

\section{Proof of Theorem \ref{thm:main}} \label{sec:main thm}

\begin{proof}[Proof of Theorem \ref{thm:main}]
    Define
\begin{align*}
    \ep_n^+&:=\exp\left(-2\exp\left(\frac\pi2+2\pi n\right)\right),\\
    \ep_n^-&:=\exp\left(-2\exp\left(\frac{3\pi}{2}+2\pi n\right)\right).
\end{align*}
Then
\[
    \omega(\sqrt{\ep_n^+})=1,
    \qquad
    \omega(\sqrt{\ep_n^-})=-1.
\]
Without loss of generality, assume $A>0$. By Lemma \ref{lem:localE},
\begin{align*}
    E_0^D(\ep_n^+) &=\ep_n^+ +3A(\ep_n^+)^2+o((\ep_n^+)^2),\\
    E_a^D(\ep_n^+) &=\ep_n^+ -3A(\ep_n^+)^2+o((\ep_n^+)^2).
\end{align*}
Thus, for all large \(n\),
\[
    E_a^D(\ep_n^+)
    \le E_0^D(\ep_n^+)-A(\ep_n^+)^2.
\]
By Lemma \ref{lem:localization}, with the normalization \(\phi^{\ep}(0)=0\),
\[
    \limsup_{n\to\infty}\phi^{\ep_n^+}(a)<0.
\]

Similarly,
\begin{align*}
    E_0^D(\ep_n^-) &=\ep_n^- -3A(\ep_n^-)^2+o((\ep_n^-)^2),\\
    E_a^D(\ep_n^-) &=\ep_n^- +3A(\ep_n^-)^2+o((\ep_n^-)^2).
\end{align*}
Therefore, for all large \(n\),
\[
    E_0^D(\ep_n^-)
    \le E_a^D(\ep_n^-)-A(\ep_n^-)^2.
\]
Again by Lemma \ref{lem:localization},
\[
    \liminf_{n\to\infty}\phi^{\ep_n^-}(a)>0.
\]
Thus, along one sequence the values \(\phi^\varepsilon(a)\) are eventually bounded away from zero on the negative side, while along another sequence they are bounded away from zero on the positive side. Consequently, the normalized family \(\{\phi^\varepsilon\}\) cannot converge in \(C(\T)\).
\end{proof}

\begin{remark}[Finite regularity]
The same construction of Theorem \ref{thm:main} can be adapted for $F$ in any finite regularity class $C^k$.
Indeed, replacing the perturbation \(x^4\omega(|x|)\) by
\(x^{2m}\omega(|x|)\) for $m\ge 2$ gives a potential of class \(C^{2m-1}\).  The same
harmonic-approximation computation yields
\[
    E_0^D(\ep)
    =
    \ep+c_m A\omega(\sqrt\ep)\ep^m+o(\ep^m),
\]
and
\[
    E_a^D(\ep)
    =
    \ep-c_m A\omega(\sqrt\ep)\ep^m+o(\ep^m),
\]
where
\[
    c_m=\int_\R z^{2m}\psi_0(z)^2\,dz=(2m-1)!!.
\]
Since this polynomial-size gap dominates the exponentially small tunneling error through the barrier, the same argument gives nonconvergence of the full
vanishing-viscosity family.  Thus, for every finite \(k\), one can choose
\(m\) large enough to obtain an example with \(F\in C^k(\T)\).
\end{remark}

We have the following consequence.
\begin{corollary}[Nonconvergence of ground-state measures]
Let \(V\) be as in Theorem \ref{thm:main}.  Let \(U_\ep>0\) be the
\(L^2(\T)\)-normalized principal eigenfunction of
\[
    P_\ep=-2\ep^2\frac{d^2}{dx^2}+V.
\]
Set
\[
    \mu_\ep:=U_\ep^2\,dx.
\]
Then the family \(\{\mu_\ep\}_{\ep>0}\) does not converge weakly in the sense of measures as \(\ep\to0\).  
More precisely, for the sequences \(\ep_n^\pm\) defined in the
proof of Theorem \ref{thm:main} in section \ref{sec:main thm},
we have the following weak convergence in the sense of measures
\[
    \mu_{\ep_n^+}\rightharpoonup \delta_a,
    \qquad
    \mu_{\ep_n^-}\rightharpoonup \delta_0.
\]
\end{corollary}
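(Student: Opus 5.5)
The plan is to reuse the mass estimates from Step 1 and Step 2 of the proof of Lemma~\ref{lem:localization} along the two sequences $\ep_n^\pm$ from the proof of Theorem~\ref{thm:main}, and then pass to the limit against a continuous test function.

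\textbf{The sequence $\ep_n^+$.} Assume $A>0$ as in that proof. Along $\ep_n^+$ we already know
\[
E_a^D(\ep_n^+)\le E_0^D(\ep_n^+)-A(\ep_n^+)^2,
\]
so hypothesis \eqref{eq:gapa} holds with $C_0=A$. Applying Step 2 with the roles of $0$ and $a$ exchanged gives
\[
m_0(\ep_n^+)\le Ce^{-s/\ep_n^+},\qquad m_a(\ep_n^+)=1+O\bigl(e^{-s/\ep_n^+}\bigr).
\]

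\textbf{Mass concentrates in every neighborhood of $a$.} For any $r\in(0,1/8)$, estimate \eqref{eq:barrierMassSimple} gives
\[
\int_{\T\setminus O_r}U_\ep^2\,dx\le Ce^{-s_r/\ep}.
\]
We may take $r<\rho/4$. Since $\chi_0\equiv1$ on $(-\rho/4,\rho/4)\supset(-r,r)$, the mass on $(-r,r)$ is at most $m_0$, which is exponentially small. Hence
\[
\mu_{\ep_n^+}\bigl((a-r,a+r)\bigr)\to1\qquad\text{for every such } r.
\]

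\textbf{Passing to the limit.} Let $f\in C(\T)$. Split
\[
\Bigl|\int f\,d\mu_\ep-f(a)\Bigr|\le \sup_{|x-a|<r}|f(x)-f(a)|+2\|f\|_\infty\,\mu_\ep\bigl(\T\setminus(a-r,a+r)\bigr).
\]
Let $\ep=\ep_n^+\to0$ first and then $r\to0$; this gives $\mu_{\ep_n^+}\rightharpoonup\delta_a$. The sequence $\ep_n^-$ is handled symmetrically: there \eqref{eq:gap0} holds, Step 2 gives $m_a(\ep_n^-)\le Ce^{-s/\ep_n^-}$, and the same argument yields $\mu_{\ep_n^-}\rightharpoonup\delta_0$. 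Since $\delta_0\neq\delta_a$, the family $\{\mu_\ep\}$ has two different subsequential limits, so it does not converge.

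The only point needing care is that Step 1 must hold for every small $r$, not just $r=\rho/3$. Its proof allows any $r\in(0,1/8)$, with $\nu$ and $s_r$ depending on $r$, and $r$ is fixed before $\ep\to0$, so this causes no difficulty.
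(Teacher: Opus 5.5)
Your proposal is correct and follows the paper's argument. You use Step 2 of Lemma~\ref{lem:localization}, with the roles of $0$ and $a$ exchanged, to make the mass in the $0$-well exponentially small, use Step 1 for the barrier mass, and then test against continuous functions.

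The only difference is how you reach arbitrarily small neighborhoods of $a$. You rerun the Agmon bound \eqref{eq:barrierMassSimple} at every small radius $r$, which gives exponentially small mass outside $(a-r,a+r)$. The paper instead uses the energy bound $\int_\T VU_\ep^2\,dx\le E(\ep)\le C\ep$ together with $V(x)\ge c|x-a|^2$ near $a$. That Chebyshev-type estimate yields only $O(\ep/\eta^2)$ for the mass at distance greater than $\eta$ from $a$ within the $a$-well, but this is sufficient.
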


\begin{proof}
    Along the sequence \(\ep_n^+\), Lemma \ref{lem:localE} gives
\[
    E_a^D(\ep_n^+)\le E_0^D(\ep_n^+)-A(\ep_n^+)^2
\]
for all large \(n\).  The proof of Lemma \ref{lem:localization}, with the
roles of \(0\) and \(a\) exchanged, implies that the \(L^2\)-mass of
\(U_{\ep_n^+}\) in the \(0\)-well and in the barrier region is exponentially
small.  Hence, the mass is concentrated in the \(a\)-well.

Moreover,
\[
    E(\ep)=\int_\T\left(2\ep^2|U_\ep'|^2+VU_\ep^2\right)\,dx\le C\ep.
\]
Since \(V(x)\ge c|x-a|^2\) around \(a\), it follows that, for every fixed
\(\eta>0\),
\[
    \int_{|x-a|>\eta}U_{\ep_n^+}^2\,dx\to0.
\]
Therefore \(\mu_{\ep_n^+}\rightharpoonup \delta_a\).

The proof along \(\ep_n^-\) is the same.  In that case \(E_0^D(\ep_n^-)\le
E_a^D(\ep_n^-)-A(\ep_n^-)^2\), so the mass concentrates at \(0\), and hence
\[
    \mu_{\ep_n^-}\rightharpoonup \delta_0.
\]
Thus \(\{\mu_\ep\}\) cannot have a weak limit.
\end{proof}

\begin{remark}
It is natural to ask a complementary quantitative question: in situations where the
full vanishing-viscosity limit
\[
    \phi^\ep \to \phi \qquad \text{in } C(\T^n)
\]
does exist, can one obtain a sharp convergence rate?

From the homogenization point of view, \eqref{eq:viscousHJ} is the cell problem
corresponding to \(P=0\) for the periodic homogenization, as \(\ep\to 0\), of the
viscous quadratic Hamilton--Jacobi equation
\[
    u_t^\ep-\ep\Delta u^\ep
    +\frac{1}{2}|D u^\ep|^2
    +F\left(\frac{x}{\ep}\right)=0.
\]
The corresponding sharp convergence rates, both globally and almost everywhere,
were obtained recently by the same authors in \cite{LTY-rate}. Methodologically,
both works exploit the special quadratic structure through the Hopf--Cole transform
and the associated spectral analysis of a linear Schr\"odinger operator, although with
different focuses. It would be interesting to investigate whether the quantitative
methods developed in \cite{LTY-rate} can also shed light on the convergence rate
\(\phi^\ep\to\phi\) in settings where the full vanishing-viscosity limit exists.
\end{remark}

\begin{thebibliography}{30}

\bibitem{Ag}
S. Agmon, 
Lectures on Exponential Decay of Solutions of Second-Order Elliptic Equations: Bounds on Eigenfunctions of N-Body Schrodinger Operations, 
(MN-29). Princeton University Press, 1982. 

\bibitem{A}
N. Anantharaman, \emph{On the zero-temperature or vanishing viscosity limit for Markov processes arising from Lagrangian dynamics}, J. Eur. Math. Soc. 6 (2004), no. 2, pp. 207--276.

\bibitem{AIPS}
N. Anantharaman, R. Iturriaga, P. Padilla, H. S\'anchez-Morgado,
\emph{Physical solutions of the Hamilton-Jacobi equation},
{Discrete and Continuous Dynamical Systems - B} 5, 3 (Apr. 2005), 513–528.

\bibitem{Bessi}
U. Bessi,
\emph{Aubry-Mather theory and Hamilton-Jacobi equations},
{Comm. Math. Phys.} 235, 3 (2003), 495–511.

\bibitem{AIM}
S. Bolotin,
New Connections Between Dynamical Systems and PDEs,
American Institute of Mathematics Workshop Open Problem List,
\url{https://aimath.org/WWN/dynpde/dynpde.pdf}.

\bibitem{CIL}

M. G. Crandall, H. Ishii, P-L. Lions, \emph{User's Guide to Viscosity Solutions of Second-Order Partial Differential Equations}, Bulletin (New Series) of the American Mathematical Society, Volume 27, Number 1, July 1992.

\bibitem{Ev4}
L. C. Evans,
\emph{Towards a Quantum Analog of Weak KAM Theory}. Commun. Math. Phys. 244, 311--334 (2004).

\bibitem{GaoLiu}
Y. Gao and J.-G. Liu, 
\emph{A selection principle for weak KAM solutions via Freidlin–Wentzell large deviation principle of invariant measures}, 
SIAM J. Math. Anal., vol. 55, no. 6, 6457--6495.


\bibitem{GISY}
D. Gomes, R. Iturriaga, H. Sanchez-Morgado, Y. Yu,
\emph{Mather measures selected by an approximation scheme}, Proc. Am. Math. Soc. 138(10), 3591–3601 (2010).

\bibitem{JKM}
H. R. Jauslin, H. O. Kreiss, and J. Moser, 
\emph{On the forced Burgers equation with periodic boundary conditions}, Differential equations: La Pietra 1996 (Florence), Proc. Sympos. Pure Math., vol. 65, Amer. Math. Soc., Providence, RI, 1999, pp. 133–153.



 \bibitem{Le-Mitake-Tran}
 N. Q. Le, H. Mitake, H. V. Tran,
{Dynamical and Geometric Aspects of Hamilton-Jacobi and Linearized Monge-Amp\`ere Equations},
Lecture Notes in Mathematics 2183, Springer.

\bibitem{LTY-rate}
Z. Liu, H. V. Tran, Y. Yu,
\emph{Sharp global and almost everywhere convergence rates for periodic homogenization of viscous quadratic Hamilton-Jacobi equations},
arXiv:2604.19948 [math.AP].




\bibitem{Simon}
B. Simon, 
\emph{Semiclassical Analysis of Low Lying Eigenvalues, II. Tunneling.}, Annals of Mathematics, vol. 120, no. 1, 1984, pp. 89–118.

\bibitem{Soga}
K. Soga, 
\emph{Selection problems of $\Z^2$-periodic entropy solutions and viscosity solutions},
Calc. Var. Partial Differential Equations 56 (2017), no. 4, Paper No. 119, 30 pp.

\bibitem{Tran}
H. V. Tran,
Hamilton--Jacobi equations: Theory and Applications, Graduate Studies in Mathematics, Volume 213, American Mathematical Society.

\bibitem{Tran1}
H. V. Tran,
\emph{Representation formulas and large time behavior for solutions to some nonconvex Hamilton-Jacobi equations}, 
arXiv:2505.01377 [math.AP].

\bibitem{TuZhang}
S. N. T. Tu, J. Zhang,
\emph{On the regularity of stochastic effective Hamiltonian},
Proc. Amer. Math. Soc. 153 (2025), 1191-1203.

\bibitem{Yu}
Y. Yu, 
\emph{A remark on the semi-classical measure from $-\tfrac{h^2}{2}\Delta+V$ with a degenerate potential}, 
Proc. Amer. Math. Soc. 135.5 (2007): 1449-1454.

\end {thebibliography}

\end{document}